\documentclass[3p,sort & compress]{elsarticle}
\usepackage{setspace}
\usepackage{amssymb}
\usepackage{latexsym}
\usepackage{amsbsy,amsfonts,amsthm,amsmath,amssymb,bbm,calc,caption,color,dsfont,graphics}
\usepackage{graphicx,ifthen,latexsym,mathrsfs,multicol}
\usepackage{epstopdf}
\usepackage[colorlinks,linkcolor=red]{hyperref}
\usepackage{multirow}
\usepackage{epsfig}
\usepackage{subfigure}
\usepackage{algorithm}
\usepackage{verbatim}
\usepackage{algorithmic}
\usepackage{natbib,overpic,pifont,psfrag,rotating,stmaryrd}
\usepackage{bm}
\usepackage{setspace}
\newtheorem{theorem}{Theorem}
\newtheorem{lemma}[theorem]{Lemma}

\newdefinition{example}{Example}

\numberwithin{equation}{section}

\numberwithin{theorem}{section}
\journal{}
\pdfoptionpdfminorversion=7

\begin{document}
\begin{frontmatter}
\title{A Weak Galerkin Method with Implicit $\theta$-schemes for Second-Order Parabolic Problems}
\author{Wenya Qi\corref{qwy}}\ead{qiwy16@lzu.edu.cn}
\cortext[qwy]{Corresponding author.}
\address{School of Mathematics and Statistics, Lanzhou University, Lanzhou 730000, PR China}
\begin{abstract}
We introduce a new weak Galerkin finite element method whose weak functions on interior neighboring edges are double-valued for parabolic problems. Based on $(P_k(T), P_{k}(e), RT_k(T))$ element, a fully discrete approach is formulated with implicit $\theta$-schemes in time for $\frac{1}{2}\leq\theta\leq 1$, which include first-order backward Euler and second-order Crank-Nicolson schemes. Moreover, the optimal convergence rates in the $L^2$ and energy norms are derived. Numerical example is given to verify the theory.
  \\
\end{abstract}
\begin{keyword}
parabolic problem, weak Galerkin, error estimate,  $\theta$-schemes, backward Euler scheme, Crank-Nicolson scheme
\end{keyword}
\end{frontmatter}

\section{Introduction}\label{sec:1}
In this paper, an extension of weak Galerkin finite element method (WG) in \cite{Wang2013A} to parabolic problems will be introduced, and referred to over-penalized weak Galerkin finite element method (OPWG). Different from single-valued weak functions on interior edges in WG (\cite{Wang2013A}, \cite{Mu2015poly}, \cite{Mu2013Weakinterface1}), double-valued weak functions appeared in \cite{liu2018an} has been employed to strengthen flexibility of WG with $(P_k(T), P_{k}(e), RT_k(T))$ element. For realizing weak continuity of WG, we naturally deal with jumps on the interior edges by the penalty terms. Importantly,  penalized terms on weak functions will be analyzed with sharp penalized parameters explicitly given.

Let $\Omega\in \mathbb{R}^d ~(d=2,3)$ be an open and bounded polygonal or polyhedral domain. A linear parabolic model is listed as follows
\begin{align}
u_t-\nabla \cdot (A \nabla u)&=f(x,t),~~~~~~~~~~\mbox{in}~\Omega\times(0,\bar{T}], \label{eq:problem1.1}\\
u&=g(x,t),~~~~~~~~~~\mbox{on}~\partial\Omega\times(0,\bar{T}], \label{eq:initialcondition1.2}\\
u(x,0)&=\varphi(x),~~~~~~~~~~~~\mbox{in}~\Omega,\label{eq:initialcondition1.3}
\end{align}
where the functions $f(x,t)$, $g(x,t)$ and $\varphi(x)$ are known in some specific spaces for well-posedness. The coefficient matrix $A(x)$ is symmetric positive, i.e., there exist two positive constants $\alpha_1$ and $\beta_1$ such that for each $w, v\in\mathbb{R}^d$
\begin{equation}\label{contandcoef}
\begin{aligned}
(Aw, v)&\leq \beta_1\Vert w\Vert\Vert v\Vert,\\
(Av, v)&\geq \alpha_1\Vert v\Vert^2.
\end{aligned}
\end{equation}

With different approximation spaces for weak gradient operator, WG with $(P_k(T), P_{k}(e), RT_k(T)),~k\geq0$ element and element $(P_{k+1}(T),P_{k+1}(e),[P_{k}(T)]^d)$ were developed for the parabolic equations in \cite{Li2013Weak} and \cite{Gao2014ON}, respectively,  first-order backward Euler full-discrete scheme being investigated. However, there are few publications on second-order fully discrete WG schemes. Based on $(P_k(T), P_{k}(e), RT_k(T))$ element \cite{Raviart1977A} and $\theta$-schemes, optimal convergence of the fully discrete OPWG approximations will be analyzed in this paper.
Note that we concern about double-valued weak functions and if the jumps go to zero along the interior edges, the usual WG method can be recovered.

The paper is organized as follows. In Sec. $2$, the semi-discrete and full-discrete OPWG schemes are introduced and the latter is unconditionally stable. In Sec. $3$, optimal convergence analysis is presented including error estimates in the $L^2$ and energy norms. Finally, numerical results demonstrate the efficiency and feasibility of the new method.

Throughout this paper, we denote by $\varepsilon$ an arbitrarily small positive constant, $\Vert\cdot\Vert$ the $L^2$-norm and $L^p(0,\bar{T};V)$ with $p\geq1$ the spaces with respect to time where $V$ represents Sobolev space (see details in \cite{Adams2003Sobolev} or \cite{Quarteroni1994}). Moreover, we use $C$ for a positive constant independent of mesh size $h$ and time step $\tau$.

\section{OPWG schemes and stability}
\label{sec2} Let $\mathcal{T}_h$ be a partition of domain $\Omega$ satisfying shape regularity in \cite{Wang2014A}. For each element $T\in\mathcal{T}_h$, $h_T$ is its diameter and $h=\max_{T\in\mathcal{T}_h}h_T$ is the mesh size of $\mathcal{T}_h$. Denote by $\mathcal{E}_{I}$ the set of interior edges or flat faces, and $\partial T$ the edges or flat faces of element $T$. Let ${P}_{k}(T)$ be the space of polynomials of degree less than or equal to $k$ in variables. The weak Galerkin finite element space for OPWG is defined as
\begin{equation*}
\begin{aligned}
 V_{h}:=&\{(v_{0},v_{b}):v_{0}|_{T}\in{P}_{k}(T),T\in \mathcal{T}_{h};v_{b}|_{e}\in{P}_{k}(e)\times {P}_{k}(e),e\in \mathcal{E}_{I};\\
 &v_{b}|_{e}\in{P}_{k}(e),e\in\partial\Omega,k\geq0\},
 \end{aligned}
\end{equation*}
in particular, $V_h^0=\{v\in V_h~\mbox{and}~v_b=0~\mbox{on}~\partial\Omega\}$. For each $v=\{v_0,v_b\}\in V_h$, we define a unique local weak gradient $\nabla_wv\in RT_k(T)$ on each element $T\in\mathcal{T}_h$ satisfying
\begin{equation*}
(\nabla_wv,q)_T=-(v_0,\nabla \cdot q )_T+\langle v_b,q\cdot \mathbf{n}\rangle_{\partial T}, ~\forall~q\in RT_k(T).
\end{equation*}
In addition, we define several local $L^2$ projection operators onto space $V_h$. For each $T\in\mathcal{T}_h$ and $e\in\partial T$, let $Q_0$, $Q_b$ be the $L^2$ projection operators to $P_k(T)$ and $P_{k}(e)$, respectively. Denote $Q_hv$ by $Q_hv=\{Q_0v,Q_bv\},~\forall v\in L^2(T)$. Meantime, define $R_h$ the $L^2$ projection onto $RT_k(T)$, and then one can obtain
$$\nabla_w(Q_hv)=R_h(\nabla v),~\forall v\in H^1(T).$$
 Furthermore, we define a div projection $\Pi_h$ for $\mathbf{q}\in H(div;\Omega)$ satisfying that $\Pi_h \mathbf{q}\in H(div;\Omega)$ and $\Pi_h \mathbf{q}\in RT_k(T)$ on each element $T$, and (see \cite{Wang2013A} and \cite{Brezzi1991Mixed})
\begin{equation}\label{eq:piprojection}
(\nabla\cdot \mathbf{q},v_0)_T=(\nabla\cdot\Pi_h \mathbf{q},v_0)_T,~\forall~v_0\in P_k(T).
\end{equation}
Then, an approximation property of the projection $\Pi_h $ is given.
\begin{lemma}\label{lem:piprojection}\cite{Wang2013A}
For $u\in H^{k+2}(\Omega),~k\geq0$, it holds \begin{equation}\label{eq:piproestimate}
\Vert\Pi_h(A\nabla u)-A\nabla_w(Q_hu)\Vert\leq Ch^{k+1}\Vert u\Vert_{k+2}.
\end{equation}
\end{lemma}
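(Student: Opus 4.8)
The plan is to estimate $\|\Pi_h(A\nabla u) - A\nabla_w(Q_h u)\|$ by inserting the intermediate quantity $A\nabla u$ and applying the triangle inequality, giving
\begin{equation*}
\|\Pi_h(A\nabla u) - A\nabla_w(Q_h u)\| \leq \|\Pi_h(A\nabla u) - A\nabla u\| + \|A\nabla u - A\nabla_w(Q_h u)\|.
\end{equation*}
First I would handle the second term. Using the identity $\nabla_w(Q_h u) = R_h(\nabla u)$ established just above the lemma, this term equals $\|A\nabla u - A R_h(\nabla u)\|$. Since $A$ is bounded by the coefficient bound \eqref{contandcoef} (with constant $\beta_1$), I would bound it by $\beta_1 \|\nabla u - R_h(\nabla u)\|$, which is the $L^2$-projection error of $\nabla u$ onto $RT_k(T)$. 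Standard approximation theory for the Raviart–Thomas space \cite{Raviart1977A} gives $\|\nabla u - R_h(\nabla u)\| \leq C h^{k+1}\|\nabla u\|_{k+1} \leq C h^{k+1}\|u\|_{k+2}$ on each element, summed over $\mathcal{T}_h$.

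For the first term, $\|\Pi_h(A\nabla u) - A\nabla u\|$, I would set $\mathbf{q} = A\nabla u \in H(\mathrm{div};\Omega)$, which has the required regularity $\|\mathbf{q}\|_{k+1} \leq C\|u\|_{k+2}$ since $A$ is smooth and $u \in H^{k+2}(\Omega)$. The div-projection $\Pi_h$ defined in \eqref{eq:piprojection} is precisely the Raviart–Thomas interpolation operator, for which the classical interpolation estimate (see \cite{Brezzi1991Mixed}) yields $\|\Pi_h\mathbf{q} - \mathbf{q}\| \leq C h^{k+1}\|\mathbf{q}\|_{k+1}$. Combining this with the regularity of $\mathbf{q}$ gives the desired $C h^{k+1}\|u\|_{k+2}$ bound.

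Adding the two contributions yields the stated estimate \eqref{eq:piproestimate}. The main obstacle, or rather the place requiring the most care, is verifying that the two projection operators involved genuinely attain the optimal order $h^{k+1}$: the RT interpolation $\Pi_h$ and the $L^2$-projection $R_h$ onto $RT_k(T)$ must both be shown to have approximation order $k+1$ for functions in $H^{k+1}$. Both facts are standard for Raviart–Thomas spaces of index $k$, so I would invoke the cited references \cite{Wang2013A}, \cite{Raviart1977A}, and \cite{Brezzi1991Mixed} rather than reprove them. A secondary subtlety is confirming that $A\nabla u$ retains enough smoothness: I would assume $A$ is at least Lipschitz (or smooth), so that multiplication by $A$ does not degrade the Sobolev regularity needed for the interpolation estimate.
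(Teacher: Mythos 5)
The paper does not prove this lemma itself---it simply cites \cite{Wang2013A}---and your argument is the standard one from that source: split by the triangle inequality through the intermediate term $A\nabla u$, use the identity $\nabla_w(Q_hu)=R_h(\nabla u)$, and invoke the optimal $O(h^{k+1})$ approximation orders of the Raviart--Thomas interpolation $\Pi_h$ and the local $L^2$ projection $R_h$. Your proof is correct (modulo the usual smoothness assumption on $A$, which you explicitly flag) and takes essentially the same route as the cited reference.
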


For the sake of achieving the scheme of OPWG for parabolic problem \eqref{eq:problem1.1}, it is necessary to define a weak bilinear form in the following equation $$a_w(v,\chi):=(A\nabla_wv,\nabla_w\chi)+J_0(v,\chi),~\forall~v, \chi\in V_h,$$
where the penalty term is well defined as
$$J_{0}(v,\chi):=\sum\limits_{e\in \mathcal{E}_{I}}| e|^{-\beta_{0}}\langle\llbracket v_b\rrbracket,\llbracket \chi_b\rrbracket\rangle_{e},~\beta_0\geq 1.$$
 Let $e\in \mathcal{E}_{I}$ be shared by adjacent elements $T_1$ and $T_2$, then we define the jump on $e$ by $\llbracket v_b\rrbracket=v_b|_{T_1\cap e}-v_b|_{T_2\cap e}$.

The semi-discrete OPWG scheme for \eqref{eq:problem1.1}-\eqref{eq:initialcondition1.3} is to seek $u_h(t)\in V_h$ satisfying the boundary condition $u_h(x, t)=Q_hg(x, t)$ on $\partial\Omega\times(0,\bar{T}]$ and the initial condition $u_h(0)=Q_h\varphi$ such that
\begin{equation}\label{eq:semi-disc-format}
\begin{aligned}
((u_0)_t,v_0)+a_w(u_h, v_h)=(f, v_0),~~\forall~v_h\in V_h^0.
\end{aligned}
\end{equation}
Now, we define energy norm as for any $v\in V_h$
$$\interleave v\interleave^2:=a_w(v, v).$$
The existence and uniqueness of semi-discrete solution of \eqref{eq:semi-disc-format} are obtained from coercivity and continuity of $a_w$ (see Lemma $3.1$ in \cite{liu2018an}).

Next, we present full-discrete OPWG schemes. The interval $(0,\bar{T}]$ is divided into subintervals by time step $\tau$ uniformly, i.e. $t^n=n*\tau$. With the $\theta$-schemes applied, the full-discrete OPWG schemes are to seek $u^n\in V_h$ satisfying the boundary condition $u^n=Q_hg(x, t^n)$ on $\partial\Omega\times(0,\bar{T}]$ and the initial condition $u^0=Q_h\varphi$ such that
\begin{equation}\label{eq:full-disc-format}
\begin{aligned}
&(\bar{\partial} u^n,v_0)+a_w(\theta u^{n}+(1-\theta)u^{n-1},v_h)\\
&=(\theta f(t^n)+(1-\theta)f(t^{n-1}),v_0),~~\forall~v_h\in V_h^0,
\end{aligned}
\end{equation}
where the parameters  $\theta$ vary in $[\frac{1}{2},1]$ and difference quotient $\bar{\partial} u^n:=\frac{u^n-u^{n-1}}{\tau}$. For simplification, we denote $f(t^n):=f(x,t^n)$. Moreover, when $\theta=1$, \eqref{eq:full-disc-format} is backward Euler scheme, and Crank-Nicolson (CN) scheme is recovered if $\theta=\frac{1}{2}$.

Let $K\in\Omega$ be a small subdomain. The flux in time interval $(t-\nabla t,t+\nabla t)$ holds
\begin{equation*}
\begin{aligned}
\int_{t-\nabla t}^{t+\nabla t}\int_Ku_tdxdt+\int_{t-\nabla t}^{t+\nabla t}\int_{\partial K}\mathbf{q}\cdot \mathbf{n} dsdt=\int_{t-\nabla t}^{t+\nabla t}\int_Kfdxdt,
\end{aligned}
\end{equation*}
where $\mathbf{q}=-A\nabla u$ is the flow rate of heat energy. Multiplying a test function $v=\{v_0,v_b=0\}$ such that $v_0=1$ in $K$ and $v_0=0$ elsewhere in \eqref{eq:semi-disc-format}, we can obtain that
\begin{equation*}
\begin{aligned}
\int_{t-\nabla t}^{t+\nabla t}\int_K(u_h)_tdxdt&-\int_{t-\nabla t}^{t+\nabla t}\int_{\partial K}R_h(A\nabla_wu_h)\cdot\mathbf{n}dsdt=\int_{t-\nabla t}^{t+\nabla t}\int_Kfdxdt.
\end{aligned}
\end{equation*}
which implies mass conservation, by taking a numerical flux $\mathbf{q}_h\cdot\mathbf{n}=-R_h(A\nabla_wu_h)\cdot\mathbf{n}$.
\subsection{Stability of the full-discrete scheme}
\label{sec2.1}At first, we will give the following Poincar$\acute{\mbox{e}}$-type inequality between the $L^2$ norm and  the energy norm.
\begin{lemma}\label{lem:poincare}For any $v\in V_h^0$, it holds
\begin{equation}\label{eq:equival1}
\begin{aligned}
\Vert v_0\Vert\leq C\interleave v\interleave.
\end{aligned}
\end{equation}
\end{lemma}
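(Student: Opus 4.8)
The plan is to use a duality (Aubin--Nitsche) argument against the elliptic operator $-\nabla\cdot(A\nabla\,\cdot)$. First I would introduce the auxiliary problem: find $\phi$ with $-\nabla\cdot(A\nabla\phi)=v_0$ in $\Omega$ and $\phi=0$ on $\partial\Omega$. Assuming $H^2$-regularity of this problem (valid for convex $\Omega$ or sufficiently smooth $A$), one has $\Vert\phi\Vert_{2}\leq C\Vert v_0\Vert$. The idea is then to write $\Vert v_0\Vert^2=(v_0,-\nabla\cdot(A\nabla\phi))$ and to transfer the derivative off $\phi$ by means of the div-projection $\Pi_h$ and the definition of the weak gradient.

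Concretely, the second step applies the commuting property \eqref{eq:piprojection} elementwise to replace $A\nabla\phi$ by $\Pi_h(A\nabla\phi)\in RT_k(T)$, and then invokes the definition of $\nabla_w v$ with the test function $q=\Pi_h(A\nabla\phi)$. Summing over $T\in\mathcal{T}_h$ gives
$$\Vert v_0\Vert^2=(\nabla_w v,\Pi_h(A\nabla\phi))-\sum_{T\in\mathcal{T}_h}\langle v_b,\Pi_h(A\nabla\phi)\cdot\mathbf{n}\rangle_{\partial T}.$$
The third step disposes of the boundary terms. Since $\Pi_h(A\nabla\phi)\in H(\mathrm{div};\Omega)$, its normal component is single-valued across each interior edge, so the two element contributions on a shared $e\in\mathcal{E}_{I}$ combine into $\langle\llbracket v_b\rrbracket,\Pi_h(A\nabla\phi)\cdot\mathbf{n}\rangle_e$, while the edges on $\partial\Omega$ contribute nothing because $v_b=0$ there for $v\in V_h^0$. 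Hence
$$\Vert v_0\Vert^2=(\nabla_w v,\Pi_h(A\nabla\phi))-\sum_{e\in\mathcal{E}_{I}}\langle\llbracket v_b\rrbracket,\Pi_h(A\nabla\phi)\cdot\mathbf{n}\rangle_e.$$

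The final step bounds the two terms. The volume term is controlled by Cauchy--Schwarz together with coercivity $\Vert\nabla_w v\Vert\leq\alpha_1^{-1/2}\interleave v\interleave$ and the stability estimate $\Vert\Pi_h(A\nabla\phi)\Vert\leq C\Vert\phi\Vert_2\leq C\Vert v_0\Vert$, giving a bound of the form $C\interleave v\interleave\Vert v_0\Vert$. For the jump term I would insert the weight $|e|^{-\beta_0/2}$ and its reciprocal, apply the discrete Cauchy--Schwarz inequality to split off $J_0(v,v)^{1/2}\leq\interleave v\interleave$, and estimate the remaining factor $\sum_{e\in\mathcal{E}_{I}}|e|^{\beta_0}\Vert\Pi_h(A\nabla\phi)\cdot\mathbf{n}\Vert_e^2$ through a polynomial trace/inverse inequality of the type $\Vert q\cdot\mathbf{n}\Vert_e^2\leq Ch_T^{-1}\Vert q\Vert_T^2$.

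I expect this last estimate to be the main obstacle: one must check that the geometric factor $|e|^{\beta_0}h_T^{-1}$ remains uniformly bounded, which is exactly where the hypothesis $\beta_0\geq1$ enters. Using $|e|\lesssim h_T^{d-1}$ and $h\leq1$ from shape regularity, this factor is dominated by a nonnegative power of $h_T$, so the remaining factor is bounded by $C\Vert\Pi_h(A\nabla\phi)\Vert^2\leq C\Vert v_0\Vert^2$. Combining the two bounds yields $\Vert v_0\Vert^2\leq C\interleave v\interleave\Vert v_0\Vert$, and dividing by $\Vert v_0\Vert$ (the case $v_0=0$ being trivial) establishes \eqref{eq:equival1}.
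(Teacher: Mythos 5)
Your proposal is correct and follows essentially the same route as the paper: a duality argument with $H^2$-regularity, the commuting property \eqref{eq:piprojection} together with the definition of $\nabla_w$ to produce the identity with a volume term and interior-edge jump terms, and then Cauchy--Schwarz plus the stability $\Vert\Pi_h(A\nabla\phi)\Vert\leq C\Vert v_0\Vert$ (the paper obtains the latter via Lemma \ref{lem:piprojection}). The only differences are cosmetic --- the paper poses the dual problem with $\Delta$ rather than $-\nabla\cdot(A\nabla\,\cdot)$, and your edge-by-edge $\ell^2$ Cauchy--Schwarz with the local trace inequality and the factor $|e|^{\beta_0}h_T^{-1}\lesssim h_T^{\beta_0(d-1)-1}$ is, if anything, a slightly more careful version of the paper's weighting step.
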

\begin{proof}Based on Theorem $2.1$ in \cite{Chen1998}, we know that the weak solution $\Psi\in H^1_0(\Omega)$ of elliptic problem $ \Delta\Psi=v_0$ satisfies $H^2$-regularity, i.e.
$\Vert \Psi\Vert_{2}\leq C\Vert v_0\Vert$. Denote by $\mathbf{q}=\nabla\Psi\in H(div;\Omega)$ and it is obvious that $\nabla\cdot\mathbf{q}=v_0$. With the use  of the definitions of $\Pi_h$ \eqref{eq:piprojection} and discrete weak gradient, trace inequality \cite{Wang2014A} and Cauchy-Schwarz inequality, then we can deduce that
\begin{equation}\label{eq:fistinequality}
\begin{aligned}
\Vert v_0\Vert^2&=\sum_{T\in\mathcal{T}_h}(v_0,\nabla\cdot\mathbf{q})_T
=\sum_{T\in\mathcal{T}_h}(v_0,\nabla\cdot\Pi_h\mathbf{q})_T\\
&=-\sum_{T\in\mathcal{T}_h}(\nabla_wv,\Pi_h\mathbf{q})_T+\sum_{e\in\mathcal{E}_{I}}\langle \llbracket v_b\rrbracket,\Pi_h\mathbf{q}\cdot\mathbf{n_e}\rangle_{e}\\
&\leq\Vert \nabla_wv\Vert\Vert\Pi_h\mathbf{q}\Vert+\sum_{e\in\mathcal{E}_{I}}\Vert \llbracket v_b\rrbracket\Vert_e\Vert\Pi_h\mathbf{q}\Vert_e\\
&\leq\Vert \nabla_wv\Vert\Vert\Pi_h\mathbf{q}\Vert+\Big(\sum_{e\in\mathcal{E}_{I}}\Vert \llbracket v_b\rrbracket\Vert_e\Big)h^{-\frac{1}{2}}\Vert\Pi_h\mathbf{q}\Vert,
\end{aligned}
\end{equation}
where $\mathbf{n}_e$ is a unit normal on $e$. From Lemma \ref{lem:piprojection}, it follows
\begin{equation}\label{eq:secondinequality}
\begin{aligned}
\Vert\Pi_h\mathbf{q}\Vert&=\Vert\Pi_h\nabla\Psi\Vert\leq\Vert\Pi_h\nabla\Psi-\nabla_w(Q_h\Psi)\Vert+\Vert\nabla_w(Q_h\Psi)\Vert\\
&\leq Ch\Vert\Psi\Vert_2+\Vert R_h\nabla\Psi\Vert\\
&\leq C\Vert v_0\Vert.
\end{aligned}
\end{equation}
Combining \eqref{eq:fistinequality} with \eqref{eq:secondinequality} leads to
\begin{equation*}
\begin{aligned}
\Vert v_0\Vert&\leq C\Big(\Vert \nabla_wv\Vert+\sum_{e\in\mathcal{E}_{I}}h^{-\frac{1}{2}}\Vert \llbracket v_b\rrbracket\Vert_e\Big)\\
&\leq C\Big(\Vert \nabla_wv\Vert+h^{\frac{\beta_0(d-1)-1}{2}}\sum_{e\in\mathcal{E}_{I}}|e|^{-\frac{\beta_0}{2}}\Vert \llbracket v_b\rrbracket\Vert_e\Big)\\
&\leq C\interleave v\interleave.
\end{aligned}
\end{equation*}
\end{proof}
\begin{theorem}\label{th:stability}
Let $u^n$ be the numerical solution of \eqref{eq:full-disc-format}. Assume $g=0$ i.e. the parabolic problem is homogeneous problem and $\Vert f(t)\Vert$ is bounded in $[0,\bar{T}]$. Then there exists a positive constant such that
\begin{equation*}
\begin{aligned}
\Vert u^{n}\Vert\leq \Vert u^{0}\Vert+C\sup_{t\in[0,\bar{T}]}\Vert f(t)\Vert.
\end{aligned}
\end{equation*}
\end{theorem}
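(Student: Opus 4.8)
The plan is to run the standard discrete energy argument, testing \eqref{eq:full-disc-format} against the $\theta$-weighted time average. Since $g=0$ forces every iterate $u^n$ into $V_h^0$, the function $w^n:=\theta u^n+(1-\theta)u^{n-1}\in V_h^0$ is an admissible test function; write $F^n:=\theta f(t^n)+(1-\theta)f(t^{n-1})$ and let $w_0^n=\theta u_0^n+(1-\theta)u_0^{n-1}$ be the interior component of $w^n$. Choosing $v_h=w^n$ turns the bilinear form into $a_w(w^n,w^n)=\interleave w^n\interleave^2\ge0$ and reduces the scheme to
$$(\bar{\partial}u^n,w_0^n)+\interleave w^n\interleave^2=(F^n,w_0^n).$$

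The crux is to extract a telescoping quantity from the difference term. I would insert the algebraic identity
$$\theta u_0^n+(1-\theta)u_0^{n-1}=\tfrac12(u_0^n+u_0^{n-1})+(\theta-\tfrac12)(u_0^n-u_0^{n-1}),$$
which, together with $\bar{\partial}u^n=(u^n-u^{n-1})/\tau$ and the symmetry of the $L^2$ inner product, gives
$$(\bar{\partial}u^n,w_0^n)=\frac{1}{2\tau}\big(\Vert u_0^n\Vert^2-\Vert u_0^{n-1}\Vert^2\big)+\frac{\theta-\frac12}{\tau}\Vert u_0^n-u_0^{n-1}\Vert^2.$$
Here the hypothesis $\theta\ge\tfrac12$ is decisive: it renders the last term nonnegative, so it may be discarded, and this is exactly what yields unconditional stability with no constraint linking $\tau$ and $h$. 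I regard this as the main point of the proof; for $\theta<\tfrac12$ the same step would leave a negative contribution and force a CFL-type restriction.

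For the right-hand side I would use the Cauchy-Schwarz inequality and then the Poincar\'e-type bound of Lemma \ref{lem:poincare}, $\Vert w_0^n\Vert\le C\interleave w^n\interleave$, which is the discretization-specific ingredient that allows the $L^2$ datum to be controlled by the energy norm in the absence of a reaction term:
$$(F^n,w_0^n)\le C\Vert F^n\Vert\,\interleave w^n\interleave\le\tfrac{C^2}{2}\Vert F^n\Vert^2+\tfrac12\interleave w^n\interleave^2,$$
the last step being Young's inequality. Absorbing $\tfrac12\interleave w^n\interleave^2$ into the left-hand side and multiplying by $2\tau$ leaves the clean recursion $\Vert u_0^n\Vert^2-\Vert u_0^{n-1}\Vert^2\le C^2\tau\Vert F^n\Vert^2$.

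Finally I would sum this telescoping inequality from $1$ to $n$; since no zeroth-order term in $u$ is present, the sum collapses without any discrete Gr\"onwall lemma to $\Vert u_0^n\Vert^2\le\Vert u_0^0\Vert^2+C^2\tau\sum_{j=1}^n\Vert F^j\Vert^2$. Bounding $\Vert F^j\Vert\le\sup_{t\in[0,\bar{T}]}\Vert f(t)\Vert$ by the triangle inequality and convexity, and using $\tau\sum_{j=1}^n1=t^n\le\bar{T}$, I obtain $\Vert u_0^n\Vert^2\le\Vert u_0^0\Vert^2+C^2\bar{T}\sup_{t\in[0,\bar{T}]}\Vert f(t)\Vert^2$. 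Taking square roots and using $\sqrt{a^2+b^2}\le a+b$ for $a,b\ge0$ then gives the asserted bound, with the final constant absorbing $C\sqrt{\bar{T}}$ (here $\Vert u^n\Vert$ is understood as $\Vert u_0^n\Vert$).
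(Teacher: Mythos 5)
Your proposal is correct and follows essentially the same route as the paper: the same test function $\theta u^n+(1-\theta)u^{n-1}$ with the same splitting into $\tfrac12(u^n+u^{n-1})+(\theta-\tfrac12)(u^n-u^{n-1})$, the same use of Lemma \ref{lem:poincare} plus Young's inequality to absorb the energy term, and the same telescoping sum. The only (harmless) differences are presentational, e.g.\ your explicit remark that the controlled quantity is $\Vert u_0^n\Vert$, which the paper leaves implicit.
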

\begin{proof}
By Cauchy-Schwarz inequality and \eqref{eq:equival1}, taking
$$v_h=\theta u^{n}+(1-\theta)u^{n-1}=(\theta-\frac{1}{2})(u^{n}-u^{n-1})+\frac{1}{2}(u^{n}+u^{n-1})$$
in \eqref{eq:full-disc-format} yields
\begin{equation}\label{eq:full-dis-sta}
\begin{aligned}
&\frac{1}{2}\Vert u^{n}\Vert^2-\frac{1}{2}\Vert u^{n-1}\Vert^2+(\theta-\frac{1}{2})\Vert u^{n}-u^{n-1}\Vert^2+\tau\interleave\theta u^{n}+(1-\theta)u^{n-1} \interleave^2\\
&=\tau(\theta f(t^{n})+(1-\theta)f(t^{n-1}),\theta u^{n}+(1-\theta)u^{n-1})\\
&\leq \frac{\tau}{4\varepsilon}\Vert \theta f(t^{n})+(1-\theta)f(t^{n-1})\Vert^2+\varepsilon\tau
\interleave\theta u^{n}+(1-\theta)u^{n-1} \interleave^2,
\end{aligned}
\end{equation}
where $\varepsilon>0$. Let $\varepsilon=\frac{1}{4}$ in \eqref{eq:full-dis-sta}, then it follows
\begin{equation*}
\begin{aligned}
\Vert u^{n}\Vert^2\leq \Vert u^{n-1}\Vert^2+
 C\tau\Vert \theta f(t^{n})+(1-\theta)f(t^{n-1})\Vert^2.
\end{aligned}
\end{equation*}
Therefore, summing the above inequality from $1$ to $n$, and with the boundedness of source function $f$, we obtain that
\begin{equation*}
\begin{aligned}
\Vert u^{n}\Vert^2&\leq \Vert u^{0}\Vert^2+
 C\tau\sum_{j=1}^{n}\Vert \theta f(t^{j})+(1-\theta)f(t^{j-1})\Vert^2\\
 &\leq \Vert u^{0}\Vert^2+C\bar{T}\sup_{t\in [0,\bar{T}]}\Vert f(t)\Vert^2,
\end{aligned}
\end{equation*}
and then the conclusion follows.
\end{proof}
\section{Optimal convergence orders}\label{sec3}
Thanks  to elliptic projection, we will establish optimal convergence analysis of the fully discrete OPWG schemes.

For $v\in H^{k+2}(\Omega)$, we define an elliptic projection $E_hv\in V_h$ satisfying the following equation
\begin{equation}\label{eq:ellipticpro}
\begin{aligned}
a_w(E_hv,\chi)=-(\nabla\cdot A\nabla v,\chi_0), ~~\forall~\chi\in V_h^0,
\end{aligned}
\end{equation}
where $E_hv$ is the $L^2$ projection of the trace of $v$ on the boundary.
Then $E_hv$ is the OPWG approximation of the solution of the elliptic problem
\begin{equation*}
\begin{aligned}
-\nabla\cdot(A\nabla v)&=f^*,~~\mbox{in}~\Omega,\\
v&=g,~~~~\mbox{on}~\partial\Omega.
\end{aligned}
\end{equation*}
The following error estimates for the elliptic projection will be used later (see \cite{liu2018an}).
\begin{lemma}\label{lem:elconverg}\cite{liu2018an}
Let $u\in H^{k+2}(\Omega),~k\geq0$,
then there exists a positive constant $C$ such that
\begin{equation}\label{eq:ellconverge}
\begin{aligned}
\interleave Q_hu-E_hu\interleave&\leq C(h^{k+1}+h^{\frac{\beta_0(d-1)-1}{2}})\Vert u\Vert_{k+2},\\
\Vert Q_hu-E_hu\Vert&\leq C(h^{k+2}+h^{\frac{\beta_0(d-1)+1}{2}}+h^{\beta_0(d-1)-1})\Vert u\Vert_{k+2}.
\end{aligned}
\end{equation}
\end{lemma}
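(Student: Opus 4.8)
The plan is to prove the two bounds in sequence: first the energy-norm estimate directly from coercivity, and then the $L^2$ estimate by an Aubin--Nitsche duality argument that feeds on the first bound. Throughout I set $\xi=Q_hu-E_hu$ and note that $\xi\in V_h^0$, since $Q_hu$ and $E_hu$ carry the same boundary trace (both equal the $L^2$ projection of $u|_{\partial\Omega}$), so that $\eqref{eq:ellipticpro}$ may be tested against $\xi$. For the first inequality I would write, using the energy norm and the elliptic projection,
\[\interleave\xi\interleave^2=a_w(\xi,\xi)=a_w(Q_hu,\xi)+(\nabla\cdot A\nabla u,\xi_0),\]
and then rewrite $(\nabla\cdot A\nabla u,\xi_0)$ exactly as in the proof of Lemma~\ref{lem:poincare}: replace $A\nabla u$ by $\Pi_h(A\nabla u)$ via \eqref{eq:piprojection}, apply the definition of $\nabla_w$ with $q=\Pi_h(A\nabla u)$ elementwise, and sum, using that the normal trace of $\Pi_h(A\nabla u)$ is single-valued across interior edges and $\xi_b=0$ on $\partial\Omega$. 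The key simplification is $J_0(Q_hu,\xi)=0$, because $\nabla_w(Q_hu)=R_h\nabla u$ together with the continuity of $u$ forces $\llbracket (Q_hu)_b\rrbracket=\llbracket Q_bu\rrbracket=0$. After cancellation this leaves
\[\interleave\xi\interleave^2=\bigl(A\nabla_w(Q_hu)-\Pi_h(A\nabla u),\nabla_w\xi\bigr)+\sum_{e\in\mathcal{E}_I}\langle\llbracket\xi_b\rrbracket,\Pi_h(A\nabla u)\cdot\mathbf{n}_e\rangle_e.\]

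The first term is bounded by $Ch^{k+1}\Vert u\Vert_{k+2}\,\interleave\xi\interleave$ through Lemma~\ref{lem:piprojection}. For the second I would insert the penalty weight, writing each factor as $|e|^{-\beta_0/2}\llbracket\xi_b\rrbracket$ against $|e|^{\beta_0/2}\Pi_h(A\nabla u)\cdot\mathbf{n}_e$, so that Cauchy--Schwarz controls the first factor by $J_0(\xi,\xi)^{1/2}\le\interleave\xi\interleave$. The remaining sum $\sum_e|e|^{\beta_0}\Vert\Pi_h(A\nabla u)\cdot\mathbf{n}_e\Vert_e^2$ is then estimated by a trace/inverse inequality together with $|e|\sim h^{d-1}$ and $\Vert\Pi_h(A\nabla u)\Vert\le C\Vert u\Vert_{k+2}$, which produces precisely the factor $h^{\frac{\beta_0(d-1)-1}{2}}$. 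Dividing by $\interleave\xi\interleave$ yields the first inequality of \eqref{eq:ellconverge}.

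For the $L^2$ bound I would take the dual solution $\Phi$ of $-\nabla\cdot(A\nabla\Phi)=\xi_0$ in $\Omega$ with $\Phi=0$ on $\partial\Omega$, so that $\Vert\Phi\Vert_2\le C\Vert\xi_0\Vert$ by the $H^2$ regularity used in Lemma~\ref{lem:poincare}. Testing \eqref{eq:ellipticpro} for $\Phi$ against $\xi$ gives $\Vert\xi_0\Vert^2=a_w(E_h\Phi,\xi)$, which I split as $a_w(E_h\Phi-Q_h\Phi,\xi)+a_w(Q_h\Phi,\xi)$. The first piece is at most $\interleave Q_h\Phi-E_h\Phi\interleave\,\interleave\xi\interleave$; inserting the already-proved energy estimate for $u$ and the same estimate for $\Phi$ at its $H^2$-capped rate $h+h^{\frac{\beta_0(d-1)-1}{2}}$, and multiplying the two brackets out, reproduces exactly the three exponents $h^{k+2}$, $h^{\frac{\beta_0(d-1)+1}{2}}$ and $h^{\beta_0(d-1)-1}$ (the cross term $h^{k+1+\frac{\beta_0(d-1)-1}{2}}$ being dominated by $h^{\frac{\beta_0(d-1)+1}{2}}$ since $k\ge0$). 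The second, consistency piece I would treat by expanding $\xi=Q_hu-E_hu$, using \eqref{eq:ellipticpro} for $E_hu$ against $Q_h\Phi\in V_h^0$, and reducing it via the $\Pi_h$-identity and $J_0(Q_hu,Q_h\Phi)=0$ to the compact form $\bigl(R_h\nabla\Phi,\,A\nabla_w(Q_hu)-\Pi_h(A\nabla u)\bigr)$.

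The main obstacle is this last term. Lemma~\ref{lem:piprojection} alone only gives it size $h^{k+1}\Vert u\Vert_{k+2}\Vert\xi_0\Vert$, which is one power short of what the stated bound requires. To recover the full order I would peel off $R_h\nabla\Phi=\nabla\Phi-(\nabla\Phi-R_h\nabla\Phi)$ and $A\nabla_w(Q_hu)=AR_h\nabla u$, use the symmetry of $A$ so that the leading pairing $(A\nabla\Phi,\nabla u)-(\nabla\Phi,A\nabla u)$ cancels, and absorb the residuals into products of the $L^2$-projection errors of $R_h$ and $\Pi_h$ (each orthogonal to $RT_k(T)$) against a smoothed coefficient, which buys the extra factor $h$ from the regularity of $A$ and of $\nabla\Phi$. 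This superconvergence-type cancellation, together with the careful tracking of the $|e|$- and $h$-powers carried by $\beta_0$ through both arguments, is the only delicate point; the rest is routine Cauchy--Schwarz, trace and inverse estimates.
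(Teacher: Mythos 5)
The paper contains no proof of this lemma to compare against: it is imported wholesale by citation from \cite{liu2018an}. Your reconstruction follows the standard route (energy bound from the $\Pi_h$-identity, then Aubin--Nitsche duality), and most of it is sound. In particular, the energy estimate is essentially complete: $\xi=Q_hu-E_hu\in V_h^0$, $J_0(Q_hu,\cdot)=0$ because $u$ has single-valued traces, the identity
\begin{equation*}
\interleave\xi\interleave^2=\bigl(A\nabla_w(Q_hu)-\Pi_h(A\nabla u),\nabla_w\xi\bigr)+\sum_{e\in\mathcal{E}_I}\langle\llbracket\xi_b\rrbracket,\Pi_h(A\nabla u)\cdot\mathbf{n}_e\rangle_e
\end{equation*}
is correct, and your weighting of the edge term by $|e|^{\pm\beta_0/2}$ together with a trace inequality and $|e|\sim h^{d-1}$ does yield $h^{(\beta_0(d-1)-1)/2}$. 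Likewise, in the duality step the product of the two energy rates (that of $u$ and the $H^2$-capped rate of the dual solution $\Phi$) reproduces the three exponents of the $L^2$ bound exactly as you say.

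The gap is in the final consistency term $\bigl(A\nabla_w(Q_hu)-\Pi_h(A\nabla u),R_h\nabla\Phi\bigr)$, which you correctly identify as the crux but do not actually resolve. Your proposed mechanism --- absorbing the residuals into ``the $L^2$-projection errors of $R_h$ and $\Pi_h$ (each orthogonal to $RT_k(T)$)'' --- is valid for $R_h$ but false for $\Pi_h$: the paper defines $\Pi_h$ only as a div-conforming interpolation satisfying \eqref{eq:piprojection}, so $A\nabla u-\Pi_h(A\nabla u)$ is \emph{not} $L^2$-orthogonal to $RT_k(T)$, and the extra power of $h$ cannot be bought that way. The residual $\bigl(A\nabla u-\Pi_h(A\nabla u),\nabla\Phi\bigr)$ must instead be treated by integration by parts over $\Omega$ (both fields lie in $H(div;\Omega)$ and $\Phi$ vanishes on $\partial\Omega$), which via \eqref{eq:piprojection} reduces it to $-\bigl((I-Q_0)\nabla\cdot(A\nabla u),(I-Q_0)\Phi\bigr)$. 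For $k\geq1$ this is at most $Ch^{k}\Vert u\Vert_{k+2}\cdot Ch^{2}\Vert\Phi\Vert_2$ and you are done; for $k=0$ the naive bound gives only $Ch\Vert u\Vert_2\Vert\Phi\Vert_1$, one order short, and closing that case is precisely where the cited reference has to do nontrivial work (additional orthogonality of the $RT_0$ interpolation error, or extra regularity of $\nabla\cdot(A\nabla u)$). As written, your sketch would not survive at the lowest order, which is the case actually used in the paper's numerical tests.
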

\subsection{Convergence of the semi-discrete scheme}
Denote the error of the semi-discrete scheme \eqref{eq:semi-disc-format} by $e_h:=Q_hu-u_h$. With the use of Lemma \ref{lem:elconverg}, error estimates can be derived as follows.
\begin{theorem}\label{the:semiconverg}
Let $u(t)$ and $u_h(t)\in V_h$ be the exact solution of \eqref{eq:problem1.1}-\eqref{eq:initialcondition1.3} and the numerical solution of \eqref{eq:semi-disc-format}, respectively. Assume $u\in L^1(0,\bar{T};H^{k+2}(\Omega))$, $u_t\in L^1(0,\bar{T};H^{k+2}(\Omega))$ and $\varphi\in H^{k+2}(\Omega)$ where $k\geq0$, then there exists a positive constant $C$ such that
\begin{equation}\label{eq:semi-l2estimate}
\begin{aligned}
\Vert e_h(t)\Vert\leq C(h^{k+2}+h^{\frac{\beta_0(d-1)+1}{2}}+h^{\beta_0(d-1)-1})(\Vert \varphi\Vert_{k+2}+\Vert u(t)\Vert_{k+2}+\int_0^t\Vert u_t\Vert_{k+2}ds),
\end{aligned}
\end{equation}
and
\begin{equation}\label{eq:semi-enerestimate}
\begin{aligned}
\interleave e_h(t)\interleave\leq& C(h^{k+1}+h^{\frac{\beta_0(d-1)-1}{2}})(\Vert \varphi\Vert_{k+2}+\Vert u(t)\Vert_{k+2}+\int_0^t\Vert u_t\Vert_{k+2}ds).
\end{aligned}
\end{equation}
Moreover, optimal convergence orders appear when the penalty parameter satisfies $\beta_0(d-1)\geq 2k+3$.
\end{theorem}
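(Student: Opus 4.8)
The plan is to use the elliptic (Ritz) projection splitting. I write the error as $e_h=Q_hu-u_h=\rho+\xi$, where $\rho:=Q_hu-E_hu$ and $\xi:=E_hu-u_h\in V_h^0$. The term $\rho$ is a pure projection error, so both $\Vert\rho_0\Vert$ and $\interleave\rho\interleave$ are controlled at once by Lemma~\ref{lem:elconverg}; these already supply the $\Vert u(t)\Vert_{k+2}$ part of the two estimates with the rates $h^{k+2}+h^{\frac{\beta_0(d-1)+1}{2}}+h^{\beta_0(d-1)-1}$ and $h^{k+1}+h^{\frac{\beta_0(d-1)-1}{2}}$. It then remains to bound the discrete-in-space component $\xi$.

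The crux is to derive a clean error equation for $\xi$. I would subtract the semi-discrete scheme \eqref{eq:semi-disc-format} from the elliptic-projection relation \eqref{eq:ellipticpro}. Since the exact solution satisfies $-\nabla\cdot(A\nabla u)=f-u_t$ pointwise in time, \eqref{eq:ellipticpro} reads $a_w(E_hu,v_h)=(f-u_t,v_0)$ for every $v_h\in V_h^0$; subtracting $((u_0)_t,v_0)+a_w(u_h,v_h)=(f,v_0)$ and adding $((\xi_0)_t,v_0)$ to both sides collapses the forcing and flux contributions. Using that $E_h$ commutes with $\partial_t$, i.e. $(E_hu)_t=E_hu_t$, together with the $L^2$-orthogonality $(u_t-Q_0u_t,v_0)=0$ for $v_0\in P_k(T)$, the right-hand side reduces to a single projection term:
\begin{equation*}
((\xi_0)_t,v_0)+a_w(\xi,v_h)=-((Q_hu_t-E_hu_t)_0,v_0),\qquad\forall\,v_h\in V_h^0.
\end{equation*}
I expect this reduction --- recognizing that the elliptic projection is defined precisely so that the $a_w$-terms cancel and the whole residual becomes the time derivative of a projection error --- to be the main conceptual step.

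With this error equation, the two estimates follow from energy arguments. For the $L^2$ bound I would test with $v_h=\xi$, so that $a_w(\xi,\xi)=\interleave\xi\interleave^2\geq0$ and the left-hand side produces $\tfrac12\frac{d}{dt}\Vert\xi_0\Vert^2$; discarding the nonnegative energy term and applying the Cauchy-Schwarz inequality gives $\frac{d}{dt}\Vert\xi_0\Vert\leq\Vert(Q_hu_t-E_hu_t)_0\Vert$, which integrates to an $L^1$-in-time bound on $\Vert\xi_0(t)\Vert$. For the energy bound I would instead test with $v_h=\xi_t\in V_h^0$ and use the symmetry of $a_w$ to write $a_w(\xi,\xi_t)=\tfrac12\frac{d}{dt}\interleave\xi\interleave^2$; after a Young/Cauchy-Schwarz step and integration this controls $\interleave\xi(t)\interleave$ by the time-integrated projection error of $u_t$. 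In both cases the initial contribution is $\xi(0)=E_h\varphi-Q_h\varphi=-\rho(0)$, again bounded by Lemma~\ref{lem:elconverg} to produce the $\Vert\varphi\Vert_{k+2}$ term, while Lemma~\ref{lem:elconverg} applied to $Q_hu_t-E_hu_t$ supplies the $\int_0^t\Vert u_t\Vert_{k+2}\,ds$ factor.

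Finally, combining the $\rho$- and $\xi$-bounds by the triangle inequality yields \eqref{eq:semi-l2estimate} and \eqref{eq:semi-enerestimate} with the stated rate factors. The optimality claim is then a direct exponent comparison: requiring the penalty-induced powers to be dominated by the approximation powers, $\tfrac12(\beta_0(d-1)+1)\geq k+2$ and $\tfrac12(\beta_0(d-1)-1)\geq k+1$ both reduce to $\beta_0(d-1)\geq 2k+3$ (the remaining term $h^{\beta_0(d-1)-1}$ is then automatically of higher order for $k\geq0$), leaving the optimal orders $h^{k+2}$ in $L^2$ and $h^{k+1}$ in energy.
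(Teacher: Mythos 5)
Your proposal is correct and follows essentially the same route as the paper: the elliptic-projection splitting $e_h=\rho+\xi$, the error equation $(\xi_t,v_0)+a_w(\xi,v)=-(\rho_t,v_0)$, testing with $\xi$ for the $L^2$ bound and with $\xi_t$ for the energy bound, and Lemma~\ref{lem:elconverg} for $\rho$, $\xi(0)$, and the $\rho_t$ residual. The only (harmless) variation is that you drop the nonnegative $a_w(\xi,\xi)$ term to get an $L^1$-in-time bound on $\Vert\rho_t\Vert$, whereas the paper absorbs $(\rho_t,\eta)$ via the Poincar\'e-type inequality \eqref{eq:equival1} and keeps an $L^2$-in-time integral; both yield the stated rates.
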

\begin{proof}It is necessary to decompose $e_h$ into two items
 $$e_h=(Q_hu-E_hu)+(E_hu-u_h):=\rho+\eta.$$
 From Lemma \ref{lem:elconverg}, we just need to estimate $\eta$. On account of the semi-discrete scheme \eqref{eq:semi-disc-format}, the definition of $E_h$ and that of $Q_h$, we get the following identity for each $v\in V_h^0$ (see \cite{Gao2014ON})
\begin{equation}\label{eq:semi-erroreq}
\begin{aligned}
(\eta_t,v_0)+a_w(\eta,v)&=(E_hu_t,v_0)-((u_0)_t,v_0)+a_w(E_hu,v)-a_w(u_h,v)\\
&=(E_hu_t,v_0)-(f,v_0)+a_w(E_hu,v)=-(\rho_t,v_0).
\end{aligned}
\end{equation}
Thus, choosing $v=\eta$ in the above identity yields
\begin{equation*}
\begin{aligned}
\frac{1}{2}\frac{d}{dt}(\eta,\eta)+a_w(\eta,\eta)=-(\rho_t,\eta).
\end{aligned}
\end{equation*}
With Cauchy-Schwarz inequality and \eqref{eq:equival1}, integrating the above equation over $(0,t)$ on the both sides shows that
\begin{equation*}
\begin{aligned}
\Vert \eta(t)\Vert^2\leq\Vert \eta(0)\Vert^2+C\int_0^t\Vert \rho_t\Vert^2ds.
\end{aligned}
\end{equation*}
Applying triangle inequality and Lemma \ref{lem:elconverg} results in the estimate \eqref{eq:semi-l2estimate}.

Moreover, we estimate $\interleave e_h(t)\interleave$. By taking $v=\eta_t$ in \eqref{eq:semi-erroreq}, one obtains
\begin{equation*}
\begin{aligned}
\Vert\eta_t\Vert^2+a_w(\eta,\eta_t)=-(\rho_t,\eta_t).
\end{aligned}
\end{equation*}
Notice that with Cauchy-Schwarz inequality, it is easy to get
$$\frac{1}{2}\frac{d}{dt}a_w(\eta,\eta)\leq \frac{1}{2}\Vert\rho_t\Vert^2, $$
and then integrating the inequality on $(0,t)$ leads to
\begin{equation*}
\begin{aligned}
\interleave\eta(t)\interleave^2\leq\interleave\eta(0)\interleave^2+\int_0^t\Vert\rho_t\Vert^2ds.
\end{aligned}
\end{equation*}
Consequently, with the use of Lemma \ref{lem:elconverg}, \eqref{eq:semi-enerestimate} is proved.
\end{proof}
\subsection{Convergence of the full-discrete scheme}
For each $t^n\in (0,\bar{T}]$, we denote the error term of full-discrete schemes by $$e^n:=Q_hu(t^n)-u^n=(Q_hu(t^n)-E_hu(t^n))+(E_hu(t^n)-u^n):=\rho^n+\eta^n. $$
Fully discrete error estimates are given in following theorem.
\begin{theorem}\label{the:fullconver}
Let $u$ and $u^n$ be the exact solution of \eqref{eq:problem1.1}-\eqref{eq:initialcondition1.3} and the numerical solution of \eqref{eq:full-disc-format}, respectively. Assume $u\in L^1(0,\bar{T};H^{k+2}(\Omega))$, $u_t\in L^1(0,\bar{T};H^{k+2}(\Omega))$ and $\varphi\in H^{k+2}(\Omega)$ with $k\geq0$. \\
When $\frac{1}{2}<\theta\leq1$, assume $u_{tt}\in L^1(0,\bar{T};L^2(\Omega))$, then there exists a positive constant $C$ such that
\begin{equation}\label{eq:full-esti1-l2}
\begin{aligned}
\Vert e^n\Vert\leq& C(h^{k+2}+h^{\frac{\beta_0(d-1)+1}{2}}+h^{\beta_0(d-1)-1})(\Vert \varphi\Vert_{k+2}+\Vert u(t^n)\Vert_{k+2}\\
&+\int_0^{t^n}\Vert u_t\Vert_{k+2} ds)+C\tau M1,
\end{aligned}
\end{equation}
and
\begin{equation}\label{eq:full-esti1-ener}
\begin{aligned}
\interleave e^n\interleave\leq& C(h^{k+1}+h^{\frac{\beta_0(d-1)-1}{2}})(\Vert \varphi\Vert_{k+2}+\Vert u(t^n)\Vert_{k+2}+\int_0^{t^n}\Vert u_t\Vert_{k+2}ds)+C\tau M1,
\end{aligned}
\end{equation}
where $M1=\int_0^{t^n}\Vert u_{tt}\Vert ds$.

When $\theta=\frac{1}{2}$, assume $u_{ttt}\in L^1(0,\bar{T};L^2(\Omega))$, then there exists a positive constant $C$ such that
\begin{equation}\label{eq:full-esti2-l2}
\begin{aligned}
\Vert e^n\Vert\leq& C(h^{k+2}+h^{\frac{\beta_0(d-1)+1}{2}}+h^{\beta_0(d-1)-1})(\Vert \varphi\Vert_{k+2}
+\Vert u(t^n)\Vert_{k+2}\\
&+\int_0^{t^n}\Vert u_t\Vert_{k+2} ds)+C\tau^2M2,
\end{aligned}
\end{equation}
and
\begin{equation}\label{eq:full-esti2-ener}
\begin{aligned}
\interleave e^n\interleave\leq& C(h^{k+1}+h^{\frac{\beta_0(d-1)-1}{2}})(\Vert \varphi\Vert_{k+2}+\Vert u(t^n)\Vert_{k+2}+\int_0^{t^n}\Vert u_t\Vert_{k+2}ds)+C\tau^2M2,
\end{aligned}
\end{equation}
where $M2=\int_0^{t^n}\Vert u_{ttt}\Vert ds$.\\
Here, optimal convergence orders appear when the penalty parameter satisfies $\beta_0(d-1)\geq 2k+3$.
\end{theorem}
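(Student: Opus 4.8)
The plan is to follow the error-splitting strategy already used for the semi-discrete scheme in Theorem \ref{the:semiconverg}, now adapted to the $\theta$-discretization. First I would write $e^n=\rho^n+\eta^n$ with $\rho^n=Q_hu(t^n)-E_hu(t^n)$ and $\eta^n=E_hu(t^n)-u^n$. By Lemma \ref{lem:elconverg} the projection part $\rho^n$ already carries the spatial factors $h^{k+2}+h^{\frac{\beta_0(d-1)+1}{2}}+h^{\beta_0(d-1)-1}$ (in $L^2$) and $h^{k+1}+h^{\frac{\beta_0(d-1)-1}{2}}$ (in energy), so the whole task reduces to estimating $\eta^n\in V_h^0$.

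The central step is a discrete error equation for $\eta^n$. Using the scheme \eqref{eq:full-disc-format}, the defining identity \eqref{eq:ellipticpro} of $E_h$ together with $-\nabla\cdot(A\nabla u)=f-u_t$, and the fact that $Q_0$ is the $L^2$-projection onto $P_k(T)$, the source terms cancel and one is left with
$$(\bar\partial\eta^n,v_0)+a_w(\theta\eta^n+(1-\theta)\eta^{n-1},v)=(\omega^n,v_0),\qquad \forall\,v\in V_h^0,$$
where $\omega^n=r^n-\bar\partial\rho^n$ and $r^n=\bar\partial u(t^n)-[\theta u_t(t^n)+(1-\theta)u_t(t^{n-1})]$ is the temporal consistency error. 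The projection part splits off cleanly as $\tau\sum_{j=1}^n\|\bar\partial\rho^j\|=\sum_{j=1}^n\|\rho^j-\rho^{j-1}\|\le\int_0^{t^n}\|\rho_t\|\,ds$, and since $\rho_t=Q_hu_t-E_hu_t$, Lemma \ref{lem:elconverg} bounds this by the spatial factors times $\int_0^{t^n}\|u_t\|_{k+2}\,ds$.

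For the $L^2$ estimate I would test with $v=\theta\eta^n+(1-\theta)\eta^{n-1}$, reproducing the algebraic identity from the stability proof in Theorem \ref{th:stability}: the time term yields $\tfrac1{2\tau}(\|\eta^n\|^2-\|\eta^{n-1}\|^2)+\tfrac{\theta-1/2}{\tau}\|\eta^n-\eta^{n-1}\|^2$, the bilinear term gives the nonnegative energy $\interleave\theta\eta^n+(1-\theta)\eta^{n-1}\interleave^2$, and since $\theta\ge\tfrac12$ both extra terms may be discarded. Cauchy--Schwarz on the right and division by $\|\eta^n\|+\|\eta^{n-1}\|$ give $\|\eta^n\|-\|\eta^{n-1}\|\le C\tau\|\omega^n\|$, and summation yields $\|\eta^n\|\le\|\eta^0\|+C\tau\sum_{j}\|\omega^j\|$, with $\|\eta^0\|=\|\rho^0\|$ controlled by Lemma \ref{lem:elconverg}. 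The energy estimate I would obtain by instead testing with $v=\bar\partial\eta^n$ and exploiting the symmetry of $a_w$ to produce the telescoping identity $a_w(\theta\eta^n+(1-\theta)\eta^{n-1},\eta^n-\eta^{n-1})=\tfrac12(\interleave\eta^n\interleave^2-\interleave\eta^{n-1}\interleave^2)+\tfrac{2\theta-1}{2}\interleave\eta^n-\eta^{n-1}\interleave^2$; absorbing $(\omega^n,\bar\partial\eta^n)$ by Young's inequality and summing gives a pointwise bound on $\interleave\eta^n\interleave$, and adding $\interleave\rho^n\interleave$ from Lemma \ref{lem:elconverg} produces \eqref{eq:full-esti1-ener} and \eqref{eq:full-esti2-ener}.

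The remaining and hardest ingredient is the temporal consistency error $r^n$, which is exactly where the distinction between $\tfrac12<\theta\le1$ and $\theta=\tfrac12$ enters. Writing $r^n=[\bar\partial u(t^n)-u_t(t^n)]+(1-\theta)[u_t(t^n)-u_t(t^{n-1})]$ and using Taylor expansion with integral remainder, for generic $\theta$ neither bracket cancels at first order, giving $\|r^n\|\le C\int_{t^{n-1}}^{t^n}\|u_{tt}\|\,ds$ and hence $\tau\sum_j\|r^j\|\le C\tau M1$. For $\theta=\tfrac12$ the two $O(\tau)$ contributions cancel when one expands symmetrically about $t^{n-1/2}$, and a more delicate integral-remainder argument is needed to show $\|r^n\|\le C\tau\int_{t^{n-1}}^{t^n}\|u_{ttt}\|\,ds$, whence $\tau\sum_j\|r^j\|\le C\tau^2 M2$. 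This cancellation, together with keeping the remainders in a summable integral form, is the main obstacle; once it is in place, collecting the spatial and temporal contributions and invoking $\beta_0(d-1)\ge 2k+3$ to make the penalty-induced powers of $h$ subdominant yields the optimal orders in \eqref{eq:full-esti1-l2}, \eqref{eq:full-esti2-l2}, \eqref{eq:full-esti1-ener} and \eqref{eq:full-esti2-ener}.
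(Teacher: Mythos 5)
Your proposal follows essentially the same route as the paper: the same splitting $e^n=\rho^n+\eta^n$ via the elliptic projection, the same error equation, the same test functions ($\theta\eta^n+(1-\theta)\eta^{n-1}$ for the $L^2$ bound and the difference quotient $\bar\partial\eta^n$ for the energy bound), and the same Taylor-with-integral-remainder treatment of the temporal consistency error distinguishing $\tfrac12<\theta\le1$ from $\theta=\tfrac12$. The only cosmetic difference is that you cancel a factor of $\Vert\eta^n\Vert+\Vert\eta^{n-1}\Vert$ to obtain an $L^1$-in-time bound on $u_{tt}$ (resp.\ $u_{ttt}$) directly, whereas the paper absorbs via Young's inequality and Lemma \ref{lem:poincare} and ends up with $L^2$-in-time integrals; both are standard, and your variant in fact matches the stated forms of $M1$ and $M2$ slightly more directly.
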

\begin{proof} Based on Lemma \ref{lem:elconverg}, it is required to estimate $\eta^n$ in the energy norm. For each $v\in V_h^0$, combining scheme \eqref{eq:full-disc-format} with the definition \eqref{eq:ellipticpro}, we have the following error equation
\begin{equation}\label{eq:full_dis_eq}
\begin{aligned}
&(\bar{\partial}\eta^n,v_0)+a_w(\theta\eta^{n}+(1-\theta)\eta^{n-1},v)\\
&=(\bar{\partial}E_hu(t^n),v_0)-(\bar{\partial}u^n,v_0)
+a_w(\theta E_hu(t^{n})+(1-\theta)E_hu(t^{n-1}),v)\\
&~~~~-a_w(\theta u^{n}+(1-\theta) u^{n-1},v)\\
&=(\bar{\partial}E_hu(t^n),v_0)-(\nabla\cdot A(\nabla(\theta u(t^{n})+(1-\theta) u(t^{n-1}))),v_0)\\
&~~~~-(\theta f(t^{n})+(1-\theta)f(t^{n-1}),v_0)\\
&=(\bar{\partial}E_hu(t^n),v_0)-(\theta u_t(t^{n})+(1-\theta) u_t(t^{n-1}),v_0)\\
&=-(\bar{\partial}\rho^n,v_0)+(\bar{\partial}Q_hu(t^n)-(\theta u_t(t^{n})+(1-\theta) u_t(t^{n-1})),v_0).
\end{aligned}
\end{equation}
By integration by parts, we can deduce the following identities when $\frac{1}{2}<\theta\leq1$,
\begin{equation}\label{eq:thetaestimate}
\begin{aligned}
\bar{\partial}u(t^n)-(\theta u_t(t^{n})+(1-\theta) u_t(t^{n-1}))=-\frac{1}{\tau}\int_{t^{n-1}}^{t^{n}}(s-(1-\theta)t^{n}-\theta t^{n-1})u_{tt}ds,
\end{aligned}
\end{equation}
and when $\theta=\frac{1}{2}$,
\begin{equation}\label{eq:thetaestimate2}
\begin{aligned}
\bar{\partial}u(t^n)-\frac{1}{2}(u_t(t^{n})+ u_t(t^{n-1}))=\frac{1}{2\tau}\int_{t^{n-1}}^{t^{n}}(t^n-s)(t^{n-1}-s)u_{ttt}ds.
\end{aligned}
\end{equation}
(i) In the case $\frac{1}{2}<\theta\leq1$. Taking $v=\theta\eta^{n}+(1-\theta)\eta^{n-1}$ and substituting \eqref{eq:thetaestimate} into \eqref{eq:full_dis_eq}, it holds
\begin{equation*}
\begin{aligned}
&\frac{1}{2\tau}\Vert\eta^{n}\Vert^2-\frac{1}{2\tau}\Vert\eta^{n-1}\Vert^2
+\frac{1}{\tau}(\theta-\frac{1}{2})\Vert\eta^{n}-\eta^{n-1}\Vert^2+\interleave \theta\eta^{n}+(1-\theta)\eta^{n-1}\interleave^2\\
&\leq-(\bar{\partial}\rho^n,\theta\eta^{n}+(1-\theta)\eta^{n-1})
+\tau^{\frac{1}{2}}(\int_{t^{n-1}}^{t^{n}}\Vert u_{tt}\Vert^2 ds)^{\frac{1}{2}}\Vert\theta\eta^{n}+(1-\theta)\eta^{n-1}\Vert.
\end{aligned}
\end{equation*}
Furthermore, with the use of Cauchy-Schwarz inequality, the above inequality can be rewritten as
\begin{equation}\label{eq:full_dis_ineq}
\begin{aligned}
&\Vert\eta^{n}\Vert^2-\Vert\eta^{n-1}\Vert^2
+2(\theta-\frac{1}{2})\Vert\eta^{n}-\eta^{n-1}\Vert^2+2\tau\interleave \theta\eta^{n}+(1-\theta)\eta^{n-1}\interleave^2\\
&\leq2\tau\Vert\bar{\partial}\rho^n\Vert\Vert\theta\eta^{n}+(1-\theta)\eta^{n-1}\Vert
+2\tau^{\frac{3}{2}}(\int_{t^{n-1}}^{t^{n}}\Vert u_{tt}\Vert^2 ds)^{\frac{1}{2}}\Vert\theta\eta^{n}+(1-\theta)\eta^{n-1}\Vert\\
&\leq\frac{\tau}{\varepsilon}\Vert\bar{\partial}\rho^n\Vert^2
+2\varepsilon\tau\Vert\theta\eta^{n}+(1-\theta)\eta^{n-1}\Vert^2
+\frac{\tau^2}{\varepsilon}\int_{t^{n-1}}^{t^{n}}\Vert u_{tt}\Vert^2 ds.
\end{aligned}
\end{equation}
 Therefore, by \eqref{eq:equival1} and $\frac{1}{2}<\theta\leq1$, choosing $\varepsilon=\frac{1}{4}$ in \eqref{eq:full_dis_ineq} leads to
\begin{equation}\label{eq:thetabig2}
\begin{aligned}
\Vert\eta^{n}\Vert^2-\Vert\eta^{n-1}\Vert^2
\leq C\tau\Vert\bar{\partial}\rho^n\Vert^2+C\tau^2\int_{t^{n-1}}^{t^{n}}\Vert u_{tt}\Vert^2 ds.
\end{aligned}
\end{equation}
By using $\Vert\bar{\partial}\rho^n\Vert^2\leq\frac{C}{\tau}\int_{t^{n-1}}^{t^{n}}\Vert (Q_h-E_h)u_t\Vert^2 ds$ and summing \eqref{eq:thetabig2} from $1$ to $n$, it holds
\begin{equation*}
\begin{aligned}
\Vert\eta^{n}\Vert^2\leq\Vert\eta^{0}\Vert^2
+C\int_0^{t^n}\Vert\rho_t\Vert^2+C\tau^2\int_0^{t^n}\Vert u_{tt}\Vert^2 ds.
\end{aligned}
\end{equation*}
The error estimate \eqref{eq:full-esti1-l2} in $L^2$-norm follows owing to $\Vert\eta^{0}\Vert=\Vert\rho^{0}\Vert$ and Lemma \ref{lem:elconverg}.

Concerning about the error estimate in energy norm, taking $v=\eta^{n}-\eta^{n-1}$ in \eqref{eq:full_dis_eq} leads to
\begin{equation*}
\begin{aligned}
&\frac{1}{\tau}\Vert\eta^{n}-\eta^{n-1}\Vert^2
+(\theta-\frac{1}{2})\interleave\eta^{n}-\eta^{n-1}\interleave^2
+\frac{1}{2}(\interleave\eta^{n}\interleave^2-\interleave\eta^{n-1}\interleave^2)\\
&=-(\bar{\partial}\rho^n,\eta^{n}-\eta^{n-1})+(\bar{\partial}Q_hu(t^n)-(\theta u_t(t^{n})+(1-\theta) u_t(t^{n-1})),\eta^{n}-\eta^{n-1})\\
&\leq C\tau\Vert\bar{\partial}\rho^n\Vert^2+C\tau^2\int_{t^{n-1}}^{t^{n}}\Vert u_{tt}\Vert^2 ds+\frac{\varepsilon}{\tau}\Vert\eta^{n}-\eta^{n-1}\Vert^2.
\end{aligned}
\end{equation*}
With $\theta>\frac{1}{2}$ and appropriate choice $\varepsilon=\frac{1}{2}$, we show that
\begin{equation}\label{eq:thetabig}
\begin{aligned}
\interleave\eta^{n}\interleave^2\leq\interleave\eta^{n-1}\interleave^2
+C\tau\Vert\bar{\partial}\rho^n\Vert^2+C\tau^2\int_{t^{n-1}}^{t^{n}}\Vert u_{tt}\Vert^2 ds.
\end{aligned}
\end{equation}
Finally, owing to Lemma \ref{lem:elconverg}, the result \eqref{eq:full-esti1-ener} follows immediately.

(ii) In the case $\theta=\frac{1}{2}$, by applying \eqref{eq:thetaestimate2} to the above process, the last term at the right hand side of \eqref{eq:thetabig2} and that of \eqref{eq:thetabig} become $C\tau^4\int_{t^{n-1}}^{t^n}\Vert u_{ttt}\Vert^2 ds$. Analogously, we can also prove the results \eqref{eq:full-esti2-l2} and \eqref{eq:full-esti2-ener}.
\end{proof}
\section{Numerical experiments}\label{sec:05}
In this section, we will give an example in 2D to verify our theory. Let $\Omega=(0,1)\times(0,1)$ and $\bar{T}=1$. We consider the backward Euler $(\theta=1)$ and CN $(\theta=\frac{1}{2})$ schemes on time discretization, respectively. The error estimates are established in time level $t^n=\bar{T}$. In the example, we set $A\in \mathbb{R}^2$ is an identity matrix, denote the convergence order by $O(h^\gamma+\tau^\sigma)$, and take the penalty parameter $\beta_0=2k+3$. The programming is implemented in Matlab while uniform triangular meshes are generated by Gmsh.
\begin{example}\label{ex:002}
The exact solution is $u=sin(2\pi (t^2+1)+\pi/2)sin(2\pi x+\pi/2)sin(2\pi y+\pi/2)$ as the same as in \cite{Li2013Weak} and the initial condition, the Dirichlet boundary condition and the source function $f$ are determined by exact solution.
\end{example}
We first consider the backward Euler OPWG scheme. Table \ref{TT01} and Table \ref{TT02} show the numerical convergence with respect to mesh sizes $h$ while the time steps are taken small enough. When taking $\tau=h^2$ in Table \ref{TT01} for $k=0$, the convergence orders are $O(h)$ and $O(h^2)$ in the energy norm and $L^2$-norm, respectively, which are in agreement with our analysis completely. Moreover, by choosing $\tau=h^3$ in Table \ref{TT02} for $k=1$, the convergence orders are $O(h^2)$ and $O(h^3)$ in the energy norm and $L^2$-norm, respectively. On the other hand, Table \ref{TT03} presents convergence orders about time step $\tau$ for $k=1$. When mesh size $h=1/128$ is fixed, the convergence orders on time are $O(\tau)$ in both energy norm and $L^2$-norm. Moreover, we illustrate the errors of Table \ref{TT03} in Fig. \ref{fig1} with $loglog$ functions. The least squares fitting method is used to get convergence rates.

\begin{table}[!th]
  \centering
  \caption{Convergence with respect to mesh size $h$ in backward Euler scheme}
  \label{TT01}
  \begin{tabular}{cccccccccc}
  \hline\noalign{\smallskip}
 \multicolumn{1}{c}{h} &\multicolumn{4}{c}{$\tau=h^2,~\theta=1,~k=0,~\beta_0=3$}\\
\cline{2-5}\noalign{\smallskip}
&$\interleave e_{h}\interleave$  &$\gamma$ &$\Vert e_{0}\Vert$&$\gamma$ \\
  \hline
 1/8  &2.4624e-01 &~~~~   &7.8378e-02&~~~~   \\
 1/16 &1.2250e-01&1.0072 &2.0368e-02&1.9441\\
 1/32 &6.1139e-02&1.0026 &5.1424e-03&1.9857\\
 1/64 &3.0554e-02&1.0007 &1.2888e-03&1.9964\\
 1/128&1.5275e-02&1.0001 &3.2239e-04&1.9991\\
  \hline
  \end{tabular}
\end{table}

\begin{table}[!th]
  \centering
  \caption{Convergence with respect to mesh size $h$ in backward Euler scheme}
  \label{TT02}
  \begin{tabular}{ccccccccc}
  \hline\noalign{\smallskip}
 \multicolumn{1}{c}{h} &\multicolumn{4}{c}{$\tau=h^3,~\theta=1,~k=1,~\beta_0=5$}\\
\cline{2-5}\noalign{\smallskip}
&$\interleave e_{h}\interleave$  &$\gamma$ &$\Vert e_{0}\Vert$&$\gamma$ \\
  \hline
 1/4  &1.5624e-01 &~~~~   &2.6917e-02 &~~~~   \\
 1/8  &3.9689e-02 &1.9769 &2.4406e-03 &3.4632\\
 1/16 &9.9530e-03 &1.9955 &2.6886e-04 &3.1823\\
 1/32 &2.4911e-03 &1.9983 &3.2656e-05 &3.0414\\
 1/64 &6.2309e-04 &1.9992 &4.0350e-06 &3.0167\\
  \hline
  \end{tabular}
\end{table}

\begin{table}[!th]
  \centering
  \caption{Convergence with respect to time step $\tau$ in backward Euler scheme}
  \label{TT03}
  \begin{tabular}{ccccccccc}
  \hline\noalign{\smallskip}
  \multicolumn{1}{c}{$\tau$} &\multicolumn{4}{c}{$h=1/128,~\theta=1,~k=1,~\beta_0=5$}\\
\cline{2-5}\noalign{\smallskip}
&$\interleave e_{h}\interleave$  &$\sigma$ &$\Vert e_{0}\Vert$&$\sigma$ \\
  \hline
$1/32  $&1.8566e-02&~~~~ &1.8572e-02 &~~~~   \\
$1/64  $&9.6512e-03&0.9438 &9.7364e-03 &0.9316 \\
$1/128 $&4.9120e-03&0.9743 &4.9825e-03 &0.9665 \\
$1/256 $&2.4872e-03&0.9817 &2.5281e-03 &0.9788 \\
$1/512 $&1.2522e-03&0.9900 &1.2667e-03 &0.9969 \\
  \hline
  \end{tabular}
\end{table}

\begin{figure}[!th]
\centering
\includegraphics[width = 13cm,height =8.0cm]{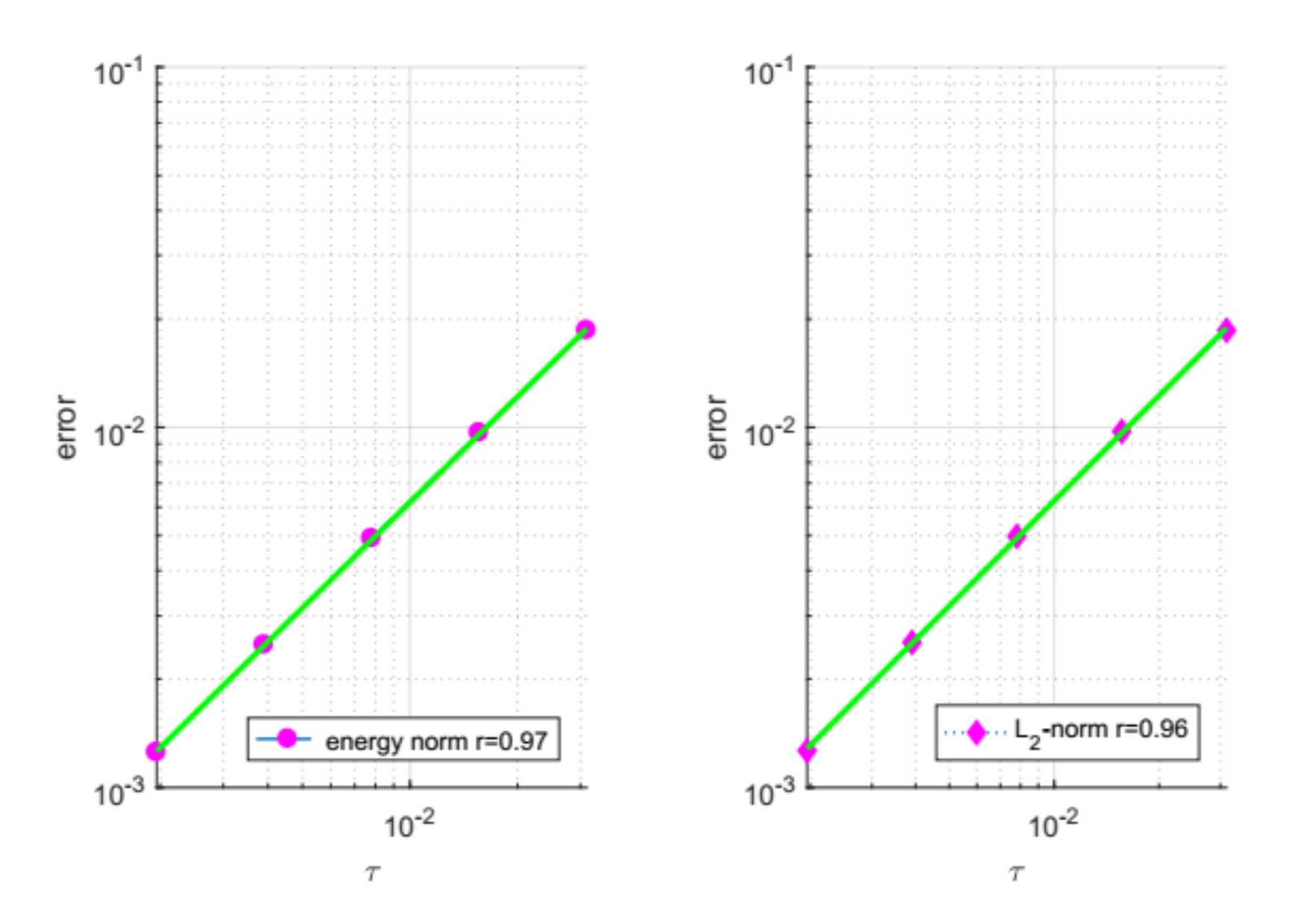}
\caption{Convergence with respect to time step $\tau$ in backward Euler scheme in Table \ref{TT03}}\label{fig1}
\end{figure}

Next, we apply the full-discrete CN scheme. Table \ref{TT04} shows that when $\tau=h^2$ and $k=0$, the convergence orders are $O(h^2)$ in the energy and $L^2$ norms  for CN scheme. It is interesting that  surperconvergence results in the energy norm are observed in Table \ref{TT04}. Moreover, with $\tau=h^3$ taken in Table \ref{TT05} for $k=1$, the convergence orders are $O(h^2)$ and $O(h^3)$ in the energy and $L^2$ norms, respectively. In Table \ref{TT06}, we consider the convergence order for CN scheme on time step $\tau$ while $k=1$. When the mesh size $h=1/128$ is fixed enough fine, the convergence orders are $O(\tau^2)$ in both energy and $L^2$ norms, which are in agreement with our theory. In Fig. \ref{fig2}, the errors in Table \ref{TT06} are plotted.

\begin{table}[!th]
  \centering
  \caption{Convergence with respect to mesh size $h$ of CN scheme}
  \label{TT04}
  \begin{tabular}{ccccccccc}
  \hline\noalign{\smallskip}
 \multicolumn{1}{c}{h} &\multicolumn{4}{c}{$\tau=h^2,~\theta=\frac{1}{2},~k=0,~\beta_0=3$}\\
\cline{2-5}\noalign{\smallskip}
&$\interleave e_{h}\interleave$  &$\gamma$ &$\Vert e_{0}\Vert$&$\gamma$\\
  \hline
 1/8  &8.1480e-02&~~~~   &7.8963e-02&~~~~  \\
 1/16 &2.1661e-02&1.9113 &2.0195e-02&1.9671\\
 1/32 &5.6280e-03&1.9444 &5.0780e-03&1.9916\\
 1/64 &1.4812e-03&1.9258 &1.2714e-03&1.9978\\
 1/128&4.0315e-04&1.8773 &3.1795e-04&1.9995\\
  \hline
  \end{tabular}
\end{table}

\begin{table}[!th]
  \centering
  \caption{Convergence with respect to mesh size $h$ of CN scheme}
  \label{TT05}
  \begin{tabular}{ccccccccc}
  \hline\noalign{\smallskip}
 \multicolumn{1}{c}{h} &\multicolumn{4}{c}{$\tau=h^3,~\theta=\frac{1}{2},~k=1,~\beta_0=5$}\\
\cline{2-5}\noalign{\smallskip}
&$\interleave e_{h}\interleave$  &$\gamma$ &$\Vert e_{0}\Vert$&$\gamma$\\
  \hline
 1/4  &3.5257e-02&~~~~   &2.7750e-02&~~~~  \\
 1/8  &8.5109e-03&2.0505 &2.2400e-03&3.6309\\
 1/16 &2.5646e-03&1.7305 &2.2664e-04&3.3050\\
 1/32 &6.7612e-04&1.9233 &2.6522e-05&3.0951\\
 1/64 &1.7167e-04&1.9776 &3.2647e-06&3.0221\\
  \hline
  \end{tabular}
\end{table}

\begin{table}[!th]
  \centering
  \caption{Convergence with respect to time step $\tau$ of CN scheme}
  \label{TT06}
  \begin{tabular}{ccccccccc}
  \hline\noalign{\smallskip}
  \multicolumn{1}{c}{$\tau$} &\multicolumn{4}{c}{$h=1/128,~\theta=\frac{1}{2},~k=1,~\beta_0=5$}\\
\cline{2-5}\noalign{\smallskip}
&$\interleave e_{h}\interleave$  &$\sigma$ &$\Vert e_{0}\Vert$&$\sigma$\\
  \hline
$1/4$  &1.2293e-01 &~~~~   &1.2361e-01 &~~~~    \\
$1/8$  &1.5203e-02 &3.0154 &1.9252e-02 &2.6827 \\
$1/16$ &2.5534e-03 &2.5738 &3.6999e-03 &2.3794\\
$1/32$ &5.8667e-04 &2.1217 &8.6930e-04 &2.0895 \\
$1/64$ &1.4530e-04 &2.0135 &2.1619e-04 &2.0075 \\
  \hline
  \end{tabular}
\end{table}

\begin{figure}[!th]
\centering
\includegraphics[width = 13cm,height =8.0cm]{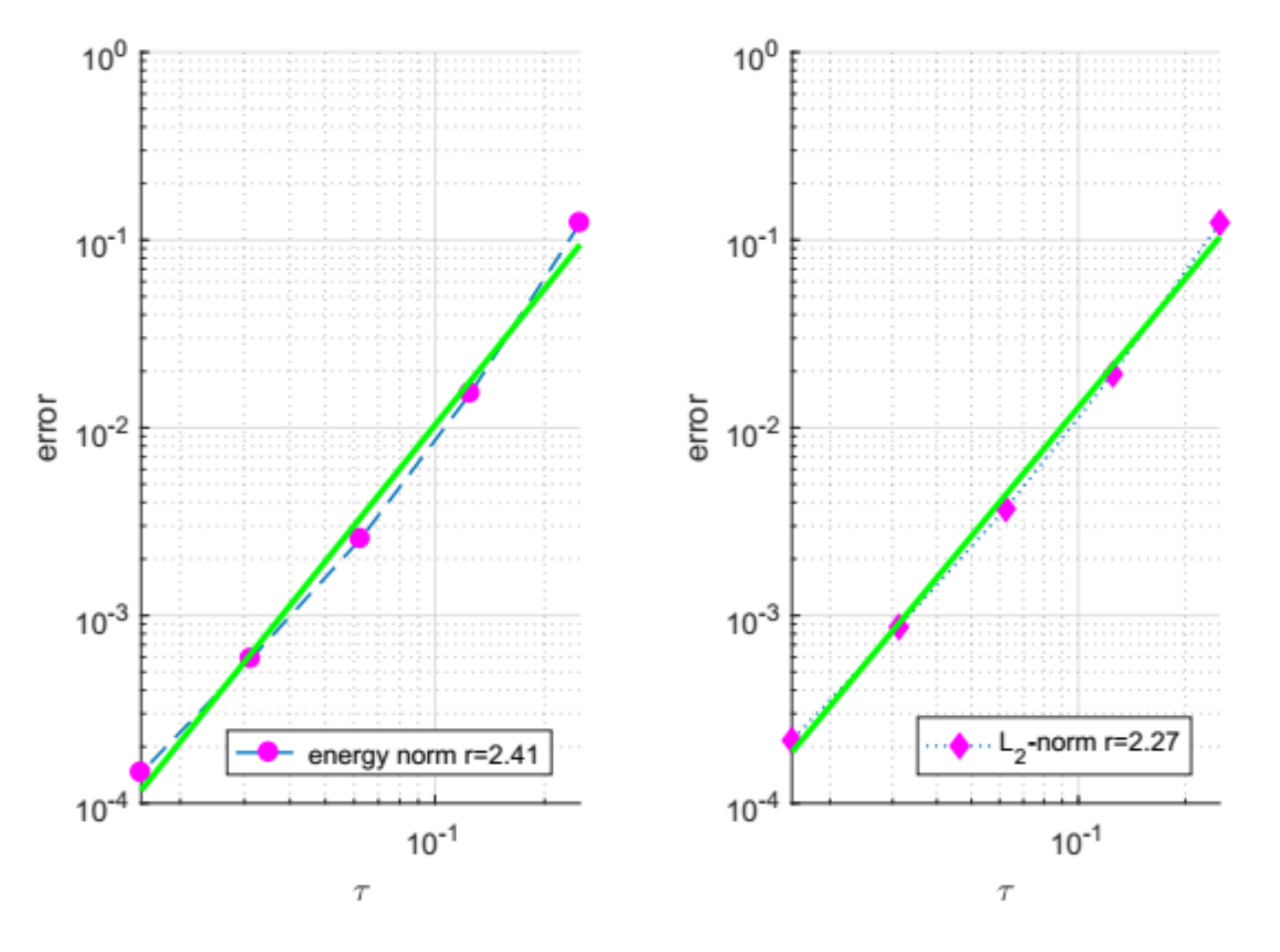}
\caption{Convergence order with respect to time step $\tau$ of CN scheme in Table \ref{TT06}}\label{fig2}
\end{figure}

\section*{Acknowledgements}
The author wishes to thank associate professor Lunji Song for his critical reading of the manuscript, helpful discussions and valuable suggestions. The research of the author is supported in part by the Natural Science Foundation of Gansu Province, China (Grant 18JR3RA290).

\end{document}